\newenvironment{customlegend}[1][]{%
	\begingroup
	\csname pgfplots@init@cleared@structures\endcsname
	\pgfplotsset{#1}%
}{%
	\csname pgfplots@createlegend\endcsname
	\endgroup
}%
\def\addlegendimage{\csname pgfplots@addlegendimage\endcsname}
\newcommand{\R}{\mathbb{R}}
\newcommand{\N}{\mathbb{N}}
\newcommand{\J}{\mathcal{J}}
\newcommand{\Jhat}{\hat{\mathcal{J}}}
\newcommand{\Jnoncor}{\hat{J}}
\newcommand{\cJhatn}{{{\Jhat_\red}}}
\newcommand{\pr}{\textnormal{pr}}
\newcommand{\du}{\textnormal{du}}
\newcommand{\Params}{\mathcal{P}}
\newcommand{\Proj}{\mathrm{P}}
\newcommand{\bformd}{a_{\mu}}
\newcommand{\lformd}{l_{\mu}}
\newcommand{\kformd}{k_{\mu}}
\newcommand{\jformd}{j_{\mu}}
\newcommand{\resd}{r_{\mu}}
\newcommand{\cont}[1]{\gamma_{#1}}
\newcommand{\red}{{r}}
\newtheorem{theorem}{Theorem}[section]
\newtheorem{proposition}[theorem]{Proposition}
\theoremstyle{definition}
\begin{document}
\title{Model Reduction for Large Scale Systems}
\thanks{The authors acknowledge funding by the Deutsche Forschungsgemeinschaft for the project {\em Localized Reduced Basis Methods for PDE-constrained Parameter Optimization}
under contract OH 98/11-1 and by the Deutsche Forschungsgemeinschaft under Germany’s Excellence Strategy EXC 2044 390685587, Mathematics M\"unster: Dynamics -- Geometry -- Structure.}

\author[T.~Keil]{Tim Keil}

\author[M.~Ohlberger]{Mario Ohlberger}

\address[]{Mathematics M\"unster, Westf\"alische Wilhelms-Universit\"at M\"unster, Einsteinstr.~62, D-48149 M\"unster, Germany. \url{https://www.wwu.de/AMM/ohlberger}}
\email{{\tt \{tim.keil,mario.ohlberger\}@uni-muenster.de}}

\begin{abstract}
Projection based model order reduction has become a mature technique for simulation of large classes of parameterized systems. However, several challenges remain for problems where the solution manifold of the parameterized system cannot be well approximated by linear subspaces. While the online efficiency of these model reduction methods is very convincing for problems with a rapid decay of the Kolmogorov n-width, there are still major drawbacks and limitations. Most importantly, the construction of the reduced system in the offline phase is extremely CPU-time and memory consuming for large scale and multi scale systems. 
For practical applications, it is thus necessary to derive model reduction techniques that do not rely on a classical offline/online splitting but allow for more flexibility in the usage of computational resources. A promising approach with this respect is model reduction with adaptive enrichment. In this contribution we investigate Petrov-Galerkin based model reduction with adaptive basis enrichment within a Trust Region approach for the solution of multi scale and large scale PDE constrained parameter optimization.

\keywords{PDE constraint optimization  \and reduced basis method \and trust region method.}
\end{abstract}
\maketitle              
\section{Introduction}
Model order reduction (MOR) is a very active research field that has seen 
tremendous development in recent years, both from a theoretical and application  
point of view. For an introduction and overview on recent development
we refer e.g. to \cite{MR3701994}. 
A particular promising model reduction approach for parameterized partial differential equations (pPDEs)
is the Reduced Basis (RB) Method that relies on the approximation of the solution manifold of pPDEs
by low dimensional linear spaces that are spanned from suitably selected particular solutions, 
called snapshots. For time-dependent problems, the POD-Greedy method \cite{Haasdonk2008277} 
defines the {\em Gold-Standard}. As RB methods rely on so called efficient offline/online splitting, they need 
to be combined with supplementary interpolation methods in case of non-affine parameter dependence 
or non-linear differential equations. The empirical interpolation method (EIM) \cite{BarraultMadayEtAl2004} 
and its various generalizations, e.g. \cite{Drohmann2012}, are key technologies with this respect. 
While RB methods are meanwhile very well established and analyzed for scalar coercive problems, 
there are still major challenges for problems with a slow convergence of the Kolmogorov N-width \cite{OhlRav16}.
Such problems in particular include pPDEs with high dimensional or even infinite dimensional parameter dependence, 
multiscale problems as well as hyperbolic or advection dominated transport problems.  
Particular promising approaches for high dimensional parameter dependence and large or multiscale problems 
are localized model reduction approaches. We refer to \cite{buhr2019localized} for a recent review of such approaches, including the localized reduced basis multiscale method (LRBMS) \cite{Ohlberger2015A2865}. 
Several of these approaches have already been applied in multiscale applications, in particular for 
battery simulation with resolved electrode geometry and Buttler-Volmer kinetics \cite{Feinauer2019}.
Based on efficient localized a posteriori error control and online enrichment, these methods overcome 
traditional offline/online splitting and are thus particularly well suited for applications in optimization or inverse 
problems as recently demonstrated in \cite{MR3662229,Ohlberger2018143}. 
In the context of PDE constrained optimization, a promising Trust Region (TR) -- RB approach that updates the reduced model 
during the trust region iteration has recently been studied in \cite{MR3716566,keil2020nonconforming,banholzer2020adaptive}.
In the latter two contributions a new non-conforming dual (NCD) approach has been introduced that improves 
the convergence of the adaptive TR-RB algorithm in the case of different reduced spaces for the corresponding primal and 
dual equations of the first order optimality system.

While these contributions were all based on Galerkin projection of the 
respective equations, we will introduce a new approach based on Petrov-Galerkin (PG) projection in the following. 
As we will demonstrate in Section \ref{sec:PG} below, the PG-reduced optimality system is a conforming approximation which 
allows for more straight forward computation of derivative information and respective a posteriori error estimates. 
In Section \ref{sec:exp} we evaluate and compare the Galerkin and Petrov-Galerkin approaches with respect to the error behavior and the resulting TR-RB approaches with adaptive enrichment.
Although the convergence of the TR-RB method can be observed for both approaches, the results demonstrate  that the PG approach may require further investigation with respect to stabilization.

\section{Petrov-Galerkin based model reduction for PDE constrained optimization}\label{sec:PG}

\label{sec:problem}

In this contribution we consider the following class of PDE constrained minimization problems:

{\color{white}
	\begin{equation}
	\tag{P}
	\label{P}
	\end{equation}
}\vspace{-40 pt} 
\begin{subequations}\begin{align}
	& \min_{\mu \in \Params} \J(u_\mu, \mu),
	&&\text{with } \J(u, \mu) = \Theta(\mu) + j_\mu(u) + k_\mu(u, u),
	\tag{P.a}\label{P.argmin}\intertext{%
		subject to $u_\mu \in V$ being the solution of the \emph{state -- or primal -- equation}
	}
	&a_\mu(u_\mu, v) = l_\mu(v) &&\text{for all } v \in V,
	\tag{P.b}\label{P.state}
	\end{align}\end{subequations}%

\setcounter{equation}{0}
where $\Theta \in \Params \to \mathbb{R}$ denotes a parameter functional. 

Here, $V$ denotes a real-valued Hilbert space with inner product $(\cdot \,,\cdot)$ and its induced norm $\|\cdot\|$, 
$\Params \subset \R^P$, with $P \in \N$ denotes a compact and convex admissible parameter set and 
 $\J: V \times \Params \to \R$ a quadratic continuous functional.
 In particular, we consider box-constraints of the form 
\[ 
\Params:= \left\{\mu\in\mathbb{R}^P\,|\,\mu_\mathsf{a} \leq \mu \leq \mu_\mathsf{b} \right\} \subset \R^P,
\] 
for given parameter bounds $\mu_\mathsf{a},\mu_\mathsf{b}\in\mathbb{R}^P$, where {``$\leq$''} has to be understood component-wise. 

For each admissible parameter $\mu \in \Params$, $a_\mu: V \times V \to \R$ denotes a continuous and coercive bilinear form,
$l_\mu, j_\mu: V \to \R$ are continuous linear functionals and $k_\mu: V \times V \to \R$ denotes a continuous symmetric bilinear form. 
The primal residual of \eqref{P.state} is key for the optimization as well as for a posteriori error estimation. 
We define for given $u \in V$, $\mu \in \Params$, the primal residual $r_\mu^\pr(u) \in V'$ associated with \eqref{P.state} by
\begin{align}
r_\mu^\pr(u)[v] := l_\mu(v) - a_\mu(u, v) &&\text{for all }v \in V.
\label{eq:primal_residual}
\end{align}

Following the approach of \emph{first-optimize-then-discretize}, we base our 
discretization and model order reduction approach on the first order necessary optimality system, i.e. (cf. \cite{banholzer2020adaptive} for details and further references)
\begin{subequations}
		\label{eq:optimality_conditions}
		\begin{align}
		r_{\bar \mu}^\pr(\bar u)[v] &= 0 &&\text{for all } v \in V,
		\label{eq:optimality_conditions:u}\\
		\partial_u \J(\bar u,\bar \mu)[v] - a_\mu(v,\bar p) &= 0 &&\text{for all } v \in V,
		\label{eq:optimality_conditions:p}\\
		(\partial_\mu \J(\bar u,\bar \mu)+\nabla_{\mu} r^\pr_{\bar\mu}(\bar u)[\bar p]) \cdot (\nu-\bar \mu) &\geq 0 &&\text{for all } \nu \in \Params. 
		\label{eq:optimality_conditions:mu}
		\end{align}	
	\end{subequations}
From \eqref{eq:optimality_conditions:p} we deduce the so-called \emph{adjoint -- or dual -- equation}  
\begin{align}
a_\mu(q, p_\mu) = \partial_u \J(u_\mu, \mu)[q]
= j_\mu(q) + 2 k_\mu(q, u_\mu)&&\text{for all } q \in V,
\label{eq:dual_solution}
\end{align}
with solution $p_{\mu} \in V$ for a fixed $\mu \in \Params$ and given the solution $u_\mu \in V$ to the state equation \eqref{P.state}.
For given $u, p \in V$, we introduce the dual residual $r_\mu^\du(u, p) \in V'$ associated with \eqref{eq:dual_solution} as
\begin{align}
r_\mu^\du(u, p)[q] := j_\mu(q) + 2k_\mu(q, u) - a_\mu(q, p)&&\text{for all }q \in V.
\label{eq:dual_residual}
\end{align}

\subsection{Petrov-Galerkin based discretization and model reduction}
Assuming $V_h^\pr, V_h^\du \subset V$ to be a finite-dimensional subspaces, we define a Petrov-Galerkin projection of (\ref{P}) onto $V_h^\pr, V_h^\du$ by
considering, for each $\mu \in \Params$, the solution $u_{h, \mu} \in V_h^\pr$ of the \emph{discrete primal equation}
\begin{align}
\bformd(u_{h, \mu}, v_h) = \lformd(v_h) &&\text{for all } v_h \in V_h^\du,
\label{eq:state_h}
\end{align}
and for given $\mu, u_{h, \mu}$, the solution $p_{h, \mu} \in V_h^\du$ of the \emph{discrete dual equation} as
\begin{align}
\bformd(q_h, p_{h, \mu}) = \partial_u \J(u_{h, \mu}, \mu)[q_h] = \jformd(q_h) + 2 \kformd(q_h, u_{h, \mu}) &&\forall q_h \in V_h^\pr.
\label{eq:dual_solution_h}
\end{align}
Note that the test space of one equation corresponds with the ansatz space of the other equation. In order to obtain a quadratic system, we require 
 $\dim V_h^\pr = \dim V_h^\du$. Note that a Ritz-Galerkin projection is obtained, if the primal and dual discrete spaces coincide, i.e. $V_h^\pr = V_h^\du$. 

Given problem adapted reduced basis (RB) spaces $V_\red^\pr \subset V_h^\pr, V_\red^\du \subset V_h^\du$ of the same low dimension $n := \dim V_\red^\pr = \dim V_\red^\du$  we obtain the reduced versions for the optimality system as follows:
\begin{subequations}
	\label{eq:optimality_conditionsRB}
	\begin{itemize}
		\item PG-RB approximation for \eqref{eq:optimality_conditions:u}: For each $\mu \in \Params$ the primal variable $u_{\red, \mu} \in V_\red^\pr$ 
		of the \emph{RB approximate primal equation} is defined through
		\begin{align}
		\bformd(u_{\red, \mu}, v_\red) = \lformd(v_\red) &\qquad \text{for all } v_\red \in V_\red^\du.
		\label{eq:state_red}
		\end{align}
		\item PG-RB approximation for \eqref{eq:optimality_conditions:p}: For each $\mu \in \Params$, $u_{\red, \mu} \in V_\red^\pr$ the dual/adjoint variable $p_{\red, \mu} \in V_\red^\du$ satisfies the \emph{RB approximate dual equation} 
		\begin{align}
		\bformd(q_\red, p_{\red, \mu}) = \partial_u \J(u_{\red, \mu}, \mu)[q_\red] = \jformd(q_\red) + 2 \kformd(q_\red, u_{\red, \mu}) &&\forall q_\red \in V_\red^\pr.
		\label{eq:dual_solution_red}
		\end{align}
	\end{itemize}
\end{subequations}

We define the PG-RB reduced optimization functional by 
\begin{align}
\cJhatn(\mu) := \J(u_{\red, \mu}, \mu) 
\label{eq:Jhat_red}
\end{align} 
with $u_{\red, \mu} \in V_\red^\pr$ being the solution of \eqref{eq:state_red}.
We then consider the \emph{RB reduced optimization problem} by finding a locally optimal solution $\bar \mu_\red$ of
\begin{align}
\min_{\mu \in \Params} \cJhatn(\mu).
\tag{$\hat{\textnormal{P}}_\red$}\label{Phat_\red}
\end{align}
Note that in contrast to the NCD-approach that has been introduced in \cite{keil2020nonconforming,banholzer2020adaptive}, we do not need to correct the 
reduced functional in our PG-RB approach, as the primal and dual solutions automatically
satisfy $r_\mu^\pr(u_{\red,\mu})[p_{\red,\mu}] = 0$. Actually, this is the main motivation for the usage of the Petrov-Galerkin approach in this contribution. 
In particular this results in the possibility to compute the gradient of the reduced functional with respect to the parameters solely based on the primal and dual solution of the 
PG-RB approximation, i.e.
\begin{align} \label{eq:red_gradient}
	\big(\nabla_\mu \cJhatn(\mu)\big)_i & = \partial_{\mu_i}\J(u_{\red,\mu},\mu) + \partial_{\mu_i}r_\mu^\pr(u_{\red,\mu})[p_{\red,\mu}]
\end{align}
for all $1 \leq i \leq P$ and $\mu \in \Params$, where $u_{\red, \mu} \in V_\red^\pr$ and $p_{\red, \mu} \in V_\red^\du$ denote the PG-RB primal and dual reduced solutions of \eqref{eq:state_red} and \eqref{eq:dual_solution_red}, respectively.
Note that in \cite{MR3716566}, the formula in \eqref{eq:red_gradient} was motivated by replacing the full order functions by their respective reduced counterpart which resulted in an inexact gradient for the non-conforming approach.
In the PG setting, \eqref{eq:red_gradient} instead defines the true gradient of $\cJhatn$ without having to add a correction term as proposed in \cite{keil2020nonconforming,banholzer2020adaptive}.
This also holds for the true Hessian which can be computed by also replacing all full order counterparts in the full order Hessian.
In this contribution we however only focus on quasi-Newton methods that do not require a reduced Hessian. 

\subsection{A posteriori error estimation for an error aware algorithm}
\label{sec:a_posteriori_estimation}
In order to construct an adaptive trust region algorithm we require a posteriori error estimation for \eqref{eq:state_red}, \eqref{eq:dual_solution_red} and \eqref{eq:Jhat_red}.
For that, we can utilize standard residual based estimation. For an a posteriori result of the gradient $\nabla_\mu \cJhatn(\mu)$, we also refer to \cite{keil2020nonconforming}.
\begin{proposition}[Upper error bound for the reduced quantities]	\label{prop:error_reduced_quantities}
	For $\mu \in \Params$, let $u_{h, \mu} \in V_h^\pr$ and $p_{h, \mu} \in V_h^\du$ be solutions of \eqref{eq:state_h} and \eqref{eq:dual_solution_h} and let $u_{\red, \mu} \in V_\red^\pr$, $p_{\red, \mu} \in V_\red^\du$ be 
	a solution of \eqref{eq:state_red}, \eqref{eq:dual_solution_red}. Then it holds
	\begin{enumerate}[(i)]
		\item
		$\|u_{h, \mu} - u_{\red, \mu}\| \leq \Delta_\pr(\mu) := \alpha_\mu^{-1}\, \|\resd^\pr(u_{\red, \mu})\|$,
		\item 	$\|p_{h, \mu} - p_{\red, \mu}\| \leq \Delta_\du(\mu) := \alpha_\mu^{-1}\big(2 \cont{\kformd}\;\Delta_\pr(\mu) + \|\resd^\du(u_{\red, \mu}, p_{\red, \mu})\|\Big)$,
		\item  $|\Jhat_h(\mu) - \cJhatn(\mu)| \leq \Delta_{\cJhatn}(\mu)
		:=  \Delta_\pr(\mu) \|\resd^\du(u_{\red, \mu}, p_{\red,\mu})\| + \Delta_\pr(\mu)^2 \cont{\kformd}$
	\end{enumerate} 
where $\alpha_\mu$ and $\cont{\kformd}$ define the inf-sup stability constant of $\bformd$ and the continuity constant of $\kformd$, respectively.
\end{proposition}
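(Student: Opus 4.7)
The plan is to derive all three bounds from the same residual--based argument, exploiting that $V_\red^\pr\subset V_h^\pr$ and $V_\red^\du\subset V_h^\du$, so that the high--fidelity primal and dual equations may be tested against any reduced function. The key consequence, and the cornerstone for (iii) in particular, is the PG--Galerkin orthogonality $\resd^\pr(u_{\red,\mu})[p_{\red,\mu}]=0$, which holds because $p_{\red,\mu}\in V_\red^\du$ is a legitimate test function in the reduced primal equation. This is precisely the feature that replaces the correction term appearing in the NCD approach of \cite{keil2020nonconforming,banholzer2020adaptive}.

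For (i), I would subtract \eqref{eq:state_h} and \eqref{eq:state_red} after testing both against the same $v_h\in V_h^\du$, obtaining the identity $\bformd(u_{h,\mu}-u_{\red,\mu},v_h)=\resd^\pr(u_{\red,\mu})[v_h]$. Since $u_{h,\mu}-u_{\red,\mu}\in V_h^\pr$, dividing by $\|v_h\|$, taking the supremum over $V_h^\du$, and invoking the discrete inf--sup stability of $\bformd$ with constant $\alpha_\mu$ yields the claim directly.

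For (ii), the analogous manipulation of \eqref{eq:dual_solution_h} and \eqref{eq:dual_solution_red} tested against $q_h\in V_h^\pr$ produces
\[
\bformd(q_h,p_{h,\mu}-p_{\red,\mu})=\resd^\du(u_{\red,\mu},p_{\red,\mu})[q_h]+2\kformd(q_h,u_{h,\mu}-u_{\red,\mu}).
\]
Continuity of $\kformd$ bounds the last term by $2\cont{\kformd}\,\Delta_\pr(\mu)\,\|q_h\|$ after using (i), while the symmetric inf--sup stability (available because $\dim V_h^\pr=\dim V_h^\du$, so the ``transpose'' inf--sup constant coincides with $\alpha_\mu$) bounds the left--hand side from below by $\alpha_\mu\|p_{h,\mu}-p_{\red,\mu}\|\,\|q_h\|$, delivering (ii).

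For (iii), where the PG orthogonality really pays off, I would expand the functional difference using symmetry of $\kformd$: setting $e:=u_{h,\mu}-u_{\red,\mu}$,
\[
\Jhat_h(\mu)-\cJhatn(\mu)=\jformd(e)+2\kformd(e,u_{\red,\mu})+\kformd(e,e).
\]
By definition \eqref{eq:dual_residual}, the first two terms equal $\bformd(e,p_{\red,\mu})+\resd^\du(u_{\red,\mu},p_{\red,\mu})[e]$, and the PG orthogonality $\bformd(e,p_{\red,\mu})=\resd^\pr(u_{\red,\mu})[p_{\red,\mu}]=0$ annihilates the first. What remains is controlled by $\|\resd^\du(u_{\red,\mu},p_{\red,\mu})\|\,\|e\|+\cont{\kformd}\,\|e\|^2$, and (i) replaces $\|e\|$ by $\Delta_\pr(\mu)$. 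The main subtlety, rather than a genuine obstacle, is to use the PG structure at exactly the right point in (iii) so that no residual cross--term survives; everything else is routine residual calculus.
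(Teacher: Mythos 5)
Your proposal is correct and follows essentially the same route as the paper: the paper's (very terse) proof invokes precisely the discrete inf-sup stability of $\bformd$ for (i) and (ii) (analogously to \cite{MR3716566}) and the Petrov--Galerkin orthogonality $\resd^\pr(u_{\red,\mu})[p_{\red,\mu}]=0$ for (iii) (as in \cite{keil2020nonconforming}), which are exactly the ingredients you work out in detail. Your expansion $\Jhat_h(\mu)-\cJhatn(\mu)=\jformd(e)+2\kformd(e,u_{\red,\mu})+\kformd(e,e)$ and the identification of $\bformd(e,p_{\red,\mu})$ with the vanishing primal residual term supply the details the paper leaves to its references.
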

\begin{proof}
	For $(i)$ and $(ii)$ we rely on the inf-sup stability of $\bformd$ on $V_h^\pr$ and $V_h^\du$ and proceed analogously to \cite{MR3716566}. For $(iii)$, we refer to \cite{keil2020nonconforming} since $r_\mu^\pr(u_{\red,\mu})[p_{\red,\mu}] = 0$.
\end{proof}
It is important to mention that the computation of these error estimators include the computation of the (parameter dependent) inf-sup constant of $\bformd$ which involves an eigenvalue problem on the FOM level.
In practice, cheaper techniques such as the successive constraint method \cite{huynh2015methods} can be used.
For the conforming approach $V_h^\pr = V_h^\du$, the inf-sup constant is equivalent to the coercivity constant $\underline{\bformd}$ of $\bformd$ which can be cheaply bounded
from below with the help of the min-theta approach, c.f. \cite{Ha14}.

\subsection{Trust-Region optimization approach and adaptive enrichment}
\label{sec:TRRBt}

Error aware Trust-Region - Reduced Basis methods (TR-RB) with several different advances and features have been extensively studied e.g. in \cite{MR3716566,keil2020nonconforming,banholzer2020adaptive}. 
They iteratively compute a first-order critical point of problem \eqref{P}.
For each outer iteration $k\geq 0$ of the TR method, we consider a model function $m^{(k)}$ as a cheap local approximation of the quadratic cost functional $\J$ in the so-called trust-region,
which has radius $\varrho^{(k)}$ and can be characterized by the a posteriori error estimator. 
In our approach we choose
$$m^{(k)}(\cdot):= \cJhatn^{(k)}(\mu^{(k)}+\cdot)$$ for $k\geq 0$,
where the super-index $(k)$ indicates that we use different RB spaces $V_\red^{*, (k)}$ in each iteration.
Thus, we can use $\Delta_{\cJhatn}(\mu)$ for characterizing the trust-region.
We are therefore interested in solving the following error aware constrained optimization sub-problem

\begin{equation}
\label{TRsubprob}
\begin{aligned}
\min_{\widetilde{\mu}\in\Params} \cJhatn^{(k)}(\widetilde{\mu}) \quad \text{ s.t. } \quad  &\frac{\Delta_{\Jhat}(\widetilde{\mu})}{\cJhatn^{(k)}(\widetilde{\mu})}\leq \varrho^{(k)}, \quad
\widetilde{\mu}:= \mu^{(k)}+s \in\Params \\ & \text{ and } r_{\tilde{\mu}}^\pr(u_{\tilde{\mu}})[v]= 0 \, \text{ for all }  v\in V.
\end{aligned}
\end{equation}

In our TR-RB algorithm we build on the algorithm in \cite{keil2020nonconforming}. In the sequel, we only summarize the main features of the algorithm and refer to the source for more details.
We initialize the RB spaces with the starting parameter $u_{\mu^{(0)}}$, i.e.~$V^{\pr,(0)}_\red = \big\{u_{h,\mu^{(0)}}\big\}$ and $V^{\du,(0)}_\red = \big\{p_{h,\mu^{(0)}}\big\}$.
For every iteration point $\mu_k$, we locally solve \eqref{TRsubprob} with the quasi-Newton projected BFGS algorithm combined with an Armijo-type condition and terminate with a standard reduced FOC termination criteria,
modified with a projection on the parameter space $\Proj_\Params$ to account for constraints on the parameter space.
Additionally, we use a second boundary termination criteria for preventing the subproblem from spending too much computational time on the boundary of the trust region. 
After the next iterate $\mu_{k+1}$ has been computed, the sufficient decrease conditions helps to decide whether to accept the iterate:
\begin{align}
\label{Suff_decrease_condition}
\cJhatn^{(k+1)}(\mu^{(k+1)})\leq \cJhatn^{(k)}(\mu_\text{\rm{AGC}}^{(k)}) && \text{ for all } k\in\mathbb{N}
\end{align}
This condition can be cheaply checked with the help of an sufficient and necessary condition.
If $\mu_{k+1}$ is accepted, we use the parameter to enrich the RB spaces in a Lagrangian manner, i.e.
$
V^{\pr,k}_{\red} = V^{\pr,k-1}_{\red} \cup \{u_{h,\mu}\}, 
V^{\du,k}_{\red} = V^{\du,k-1}_{\red} \cup \{p_{h,\mu}\}.
$
For more basis constructions, we refer to \cite{banholzer2020adaptive} where also a strategy is proposed to skip an enrichment.
After the enrichment, overall convergence of the algorithm can be checked with a FOM-type FOC condition
$$\|\mu^{(k+1)}-\Proj_\Params(\mu^{(k+1)}-\nabla_\mu \Jhat_h(\mu^{(k+1)}))\|_2\leq \tau_{\text{\rm{FOC}}},$$
where the FOM quantities are available from the enrichment. Moreover, these FOM quantities allow for computing a condition for possibly enlargement of the TR radius if the reduced model is better than expected.
For this, we use
\begin{equation}
\label{TR_act_decrease}
\varrho^{(k)}:= \frac{\Jhat_h(\mu^{(k)})-\Jhat_h(\mu^{(k+1)})}{\cJhatn^{(k)}(\mu^{(k)})-\cJhatn^{(k)}(\mu^{(k+1)})}  \geq  \eta_\varrho
\end{equation}
For the described algorithm the following convergence result holds.
\begin{theorem}[Convergence of the TR-RB algorithm, c.f.~\cite{banholzer2020adaptive}]
	\label{Thm:convergence_of_TR}
	For sufficient assumptions on the Armijo search to solve \eqref{TRsubprob}, every accumulation point $\bar\mu$ of the sequence $\{\mu^{(k)}\}_{k\in\mathbb{N}}\subset \Params$ generated by the above described TR-RB algorithm
	is an approximate first-order critical point for $\Jhat_h$, i.e., it holds
	\begin{equation}
	\label{First-order_critical_condition}
	\|\bar \mu-\Proj_\Params(\bar \mu-\nabla_\mu \Jhat_h(\bar \mu))\|_2 = 0.
	\end{equation}
\end{theorem}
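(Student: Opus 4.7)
The plan is to adapt the classical trust-region convergence theory to the present RB setting, exploiting the interplay between the a posteriori estimator $\Delta_{\cJhatn}$ (which defines the TR constraint) and the Lagrangian enrichment at accepted iterates. Since the statement is essentially cited from \cite{banholzer2020adaptive}, the sketch would mainly verify that the PG-RB ingredients used here fit the hypotheses of that reference.

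First, I would establish a \emph{model consistency} property at accepted iterates: because enrichment inserts $u_{h,\mu^{(k)}}$ and $p_{h,\mu^{(k)}}$ into the updated RB spaces $V_\red^{\pr,(k)}, V_\red^{\du,(k)}$, the PG-RB primal and dual solutions at $\mu^{(k)}$ coincide with their FOM counterparts. Combined with \eqref{eq:red_gradient} and the identity $r_\mu^\pr(u_{\red,\mu})[p_{\red,\mu}] = 0$, this yields
\begin{equation*}
\cJhatn^{(k)}(\mu^{(k)}) = \Jhat_h(\mu^{(k)}), \qquad \nabla_\mu \cJhatn^{(k)}(\mu^{(k)}) = \nabla_\mu \Jhat_h(\mu^{(k)}).
\end{equation*}
This is exactly where the Petrov-Galerkin formulation obviates the correction terms required in the NCD approach. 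Second, I would derive monotone decrease of $\{\Jhat_h(\mu^{(k)})\}$ from \eqref{Suff_decrease_condition} by estimating $|\Jhat_h(\mu^{(k+1)}) - \cJhatn^{(k+1)}(\mu^{(k+1)})|$ via Proposition \ref{prop:error_reduced_quantities}(iii), invoking the TR constraint $\Delta_{\cJhatn}(\widetilde\mu)/\cJhatn^{(k)}(\widetilde\mu) \leq \varrho^{(k)}$ to control the residual gap, and using the acceptance/rejection rule \eqref{TR_act_decrease}. Compactness of $\Params$ together with continuity of $\Jhat_h$ then guarantees both boundedness from below and the existence of an accumulation point $\bar\mu$.

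Third, I would invoke the standard fraction-of-Cauchy-decrease estimate associated with the AGC point $\mu_{\mathrm{AGC}}^{(k)}$ produced by the projected Armijo search. Together with model consistency at $\mu^{(k)}$, this yields an estimate of the form
\begin{equation*}
\cJhatn^{(k)}(\mu^{(k)}) - \cJhatn^{(k)}(\mu_{\mathrm{AGC}}^{(k)}) \;\geq\; \kappa \, \big\|\mu^{(k)}-\Proj_\Params(\mu^{(k)}-\nabla_\mu \Jhat_h(\mu^{(k)}))\big\|_2 \, \min\{\varrho^{(k)}, \text{const}\},
\end{equation*}
for some $\kappa>0$ depending only on the Armijo parameters and the Lipschitz constant of $\nabla_\mu \Jhat_h$. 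A contradiction argument then completes the proof: if $\bar\mu$ were not first-order critical, the projected gradient norm would stay bounded away from zero along a subsequence, forcing a persistent decrease through the AGC bound, which is incompatible with the already established convergence of $\{\Jhat_h(\mu^{(k)})\}$. Passing to the limit in the FOM-FOC termination criterion via continuity of $\nabla_\mu \Jhat_h$ and of $\Proj_\Params$ gives \eqref{First-order_critical_condition}.

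The hard part will be controlling how the TR radius $\varrho^{(k)}$ evolves: one needs to rule out $\varrho^{(k)} \to 0$ at a non-critical accumulation point. The argument hinges on the ratio test \eqref{TR_act_decrease} together with the estimator consistency from Proposition \ref{prop:error_reduced_quantities} — whenever $\bar\mu$ is not critical, accepted steps produce an actual decrease comparable to the predicted one, so $\varrho^{(k)} \geq \eta_\varrho$ infinitely often. Making this quantitative, and ensuring the boundary-termination safeguard in the subproblem does not interfere, is the technical heart of the argument. All remaining steps (measurability, well-posedness of the subproblem, parameter-space projection) are routine adaptations of the calculations carried out in \cite{keil2020nonconforming,banholzer2020adaptive}.
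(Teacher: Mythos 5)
The paper does not actually prove this theorem: it is quoted from \cite{banholzer2020adaptive}, and the only paper-specific content is the remark that the convergence argument is insensitive to how the reduced primal/dual equations are defined as long as Proposition~\ref{prop:error_reduced_quantities} holds. Your sketch reconstructs precisely the argument of that reference --- consistency of the model at enriched iterates, monotone decrease of $\Jhat_h$ via the sufficient decrease condition \eqref{Suff_decrease_condition} at the AGC point, a fraction-of-Cauchy-decrease estimate, and a contradiction argument at a non-critical accumulation point --- so the approach is the same one the paper is implicitly invoking. Two remarks. First, your ``model consistency'' step (reproduction of $u_{h,\mu^{(k)}}$, $p_{h,\mu^{(k)}}$ by the reduced model after Lagrangian enrichment) is automatic for a Galerkin projection but for the Petrov--Galerkin pairing it additionally requires that the reduced system on $(V_\red^{\pr,(k)}, V_\red^{\du,(k)})$ be uniquely solvable, i.e.\ that the reduced inf-sup constant be bounded away from zero; this is exactly the stability issue the paper flags as unresolved, so it must be listed as a hypothesis rather than taken for granted. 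Second, the monotone decrease of $\{\Jhat_h(\mu^{(k)})\}$ follows more directly than your route through Proposition~\ref{prop:error_reduced_quantities}(iii): after enrichment one has $\cJhatn^{(k+1)}(\mu^{(k+1)}) = \Jhat_h(\mu^{(k+1)})$ exactly, so \eqref{Suff_decrease_condition} together with consistency at $\mu^{(k)}$ gives $\Jhat_h(\mu^{(k+1)}) \leq \Jhat_h(\mu^{(k)})$ without any estimator; the estimator is needed only to make the trust-region constraint and the cheap acceptance check computable. With the inf-sup assumption made explicit, your outline matches the cited proof.
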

Note that the above Theorem is not affected from the choice of the reduced primal and dual equations as long as Proposition~\ref{prop:error_reduced_quantities} holds.
In fact, the result can also be used for the TR-RB with the proposed PG approximation of these equations.

Let us also mention that it has been shown in \cite{banholzer2020adaptive} that a projected Newton algorithm for the subproblems can enhance the convergence speed and accuracy.
Moreover, a reduced Hessian can be used to introduce an a posteriori result for the optimal parameter which can be used as post processing.
In the work at hand, we neglect to transfer the ideas from \cite{banholzer2020adaptive} for the PG variant.  

\section{Numerical experiments}\label{sec:exp}

In this section, we analyze the behavior of the proposed PG variant of the TR-RB (BFGS PG TR-RB) algorithm.
We aim to compare the computational time and the accuracy of the PG variant to existing approaches from the literature.
To this end, we mention the different algorithms that we compare in this contribution: \\
\textbf{Projected BFGS FOM:} As FOM reference optimization method, we consider a standard projected BFGS method,
which uses FOM evaluations for all required quantities. \\
\textbf{Non-conforming TR-RB algorithm from \cite{keil2020nonconforming} (BFGS NCD TR-RB):} 
For comparison, we choose the above described TR-RB algorithm but with a non conforming choice of the RB spaces, which means that \eqref{eq:state_red} and \eqref{eq:dual_solution_red} are Galerkin projected equations where the test space coincides with the respective ansatz space.
This requires to use the NCD-corrected reduced objective functional $ \Jhat_\red(\mu) + r_\mu^\pr(u_{\red,\mu})[p_{\red,\mu}]$ and requires additional quantities for computing the Gradient.

For computational details including the choice of all relevant tolerances for both TR-RB algorithms we again refer to \cite{keil2020nonconforming}, where all details can be found. We only differ in the choice of the stopping tolerance $\tau_{\text{\rm{FOC}}} = 10^{-6}$.
Also note that, as pointed out in Section \ref{sec:a_posteriori_estimation}, we use the same error estimators as in \cite{keil2020nonconforming} which include the coercivity constant instead of the inf-sub constant.
The source code for the presented experiments can be found in \cite{Code}, which is a revised version of the source code for \cite{keil2020nonconforming} and also contains detailed interactive \texttt{jupyter}-notebooks\footnote{Available at \url{https://github.com/TiKeil/Petrov-Galerkin-TR-RB-for-pde-opt}.}.

\subsection{Model problem: Quadratic objective functional with elliptic PDE constraints}
\label{sec:model_problem}
For our numerical evaluation we reconsider  Experiment 1 from \cite{banholzer2020adaptive}.
We set the objective functional to be a weighted $L^2$-misfit on a domain of interest $D \subseteq \Omega$ with a weighted Tikhonov term, i.e.
\begin{align*}
\mathcal{J}(v, \mu) = \frac{\sigma_d}{2} \int_{D}^{} (v - u^{\text{d}})^2 + \frac{1}{2} \sum^{M}_{i=1} \sigma_i (\mu_i-\mu^{\text{d}}_i)^2 + 1,
\end{align*} 
with a desired state $u^{\text{d}}$ and desired parameter $\mu^{\text{d}}$. We added the constant term $1$ to verify that $\mathcal{J}>0$. 
With respect to the formulation in \eqref{P.argmin}, we have
$\Theta(\mu) = \frac{\sigma_d}{2} \sum^{M}_{i=1} \sigma_i (\mu_i-\mu^{\text{d}}_i)^2 + \frac{\sigma_d}{2} \int_{D}^{} u^{\text{d}} u^{\text{d}}$, $
j_{\mu}(u) = -\sigma_d \int_{D}^{} u^{\text{d}}u$, and $k_{\mu}(u,u) = \frac{\sigma_d}{2} \int_{D}^{} u^2$.
From this general choice we can construct several applications. One instance is to consider the stationary heat equation as equally constraints, i.e. the weak formulation of the parameterized equation
\begin{equation} \label{eq:prot_state}
\begin{split}
-  \nabla \cdot \left( \kappa_{\mu}  \nabla u_{\mu} \right) &= f_{\mu} \hspace{58pt} \text{in } \Omega, \\
c_\mu ( \kappa_{\mu}  \nabla u_{\mu} \cdot n) &= (u_{\text{out}} - u_{\mu}) \hspace{20pt} \text{on } \partial \Omega.
\end{split}
\end{equation}
with parametric diffusion coefficient $\kappa_{\mu}$ and source $f_{\mu}$, outside temperature $u_{\text{out}}$ and Robin function $c_\mu$. 
From this the bilinear and linear forms $a_{\mu}$ and $l_\mu$ can easily be deduced
and we set the parameter box constraints
$
\mu_i \in [\mu_i^{\text{min}}, \mu_i^{\text{max}}].
$

As an application of \eqref{eq:prot_state}, we use the blueprint of a building with windows, heaters, doors and walls, which can be parameterized according to Figure \ref{ex1:blueprint}.
We picked a certain domain of interest $D$ and we enumerated all windows, walls, doors and heaters separately.
\begin{figure}
	\begin{subfigure}[b]{0.60\textwidth}
		\includegraphics[width=\textwidth]{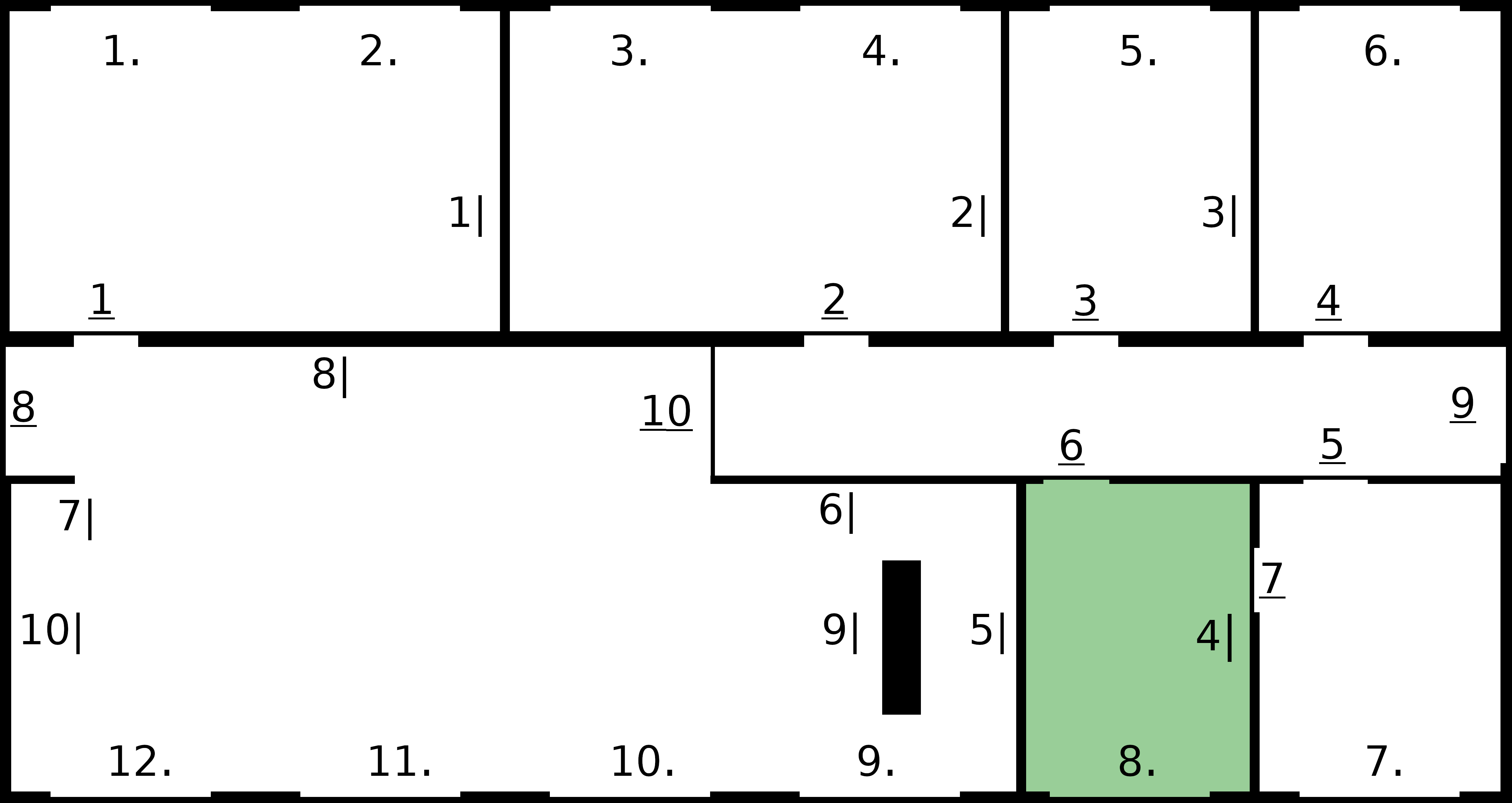}
	\end{subfigure}
	\centering
	\captionsetup{width=\textwidth}
	\caption{\footnotesize{%
			Parameterization based on a blueprint of a building floor, see \cite{keil2020nonconforming} for details. Numbers indicate potential parameters, where $i.$ is a window or a heater, $\underbar{j}$ are doors, and $k|$ are walls. The green region illustrates the domain of interest $D$.
	}}
	\label{ex1:blueprint}
\end{figure}
We set the computational domain to $\Omega := [0,2] \times [0,1] \subset \mathbb{R}^2$ and we model boundary conditions by incorporating all walls and windows that touch the boundary of the blueprint to the Robin function $c_\mu$. 
All other diffusion components enter the diffusion coefficient $\kappa_\mu$, whereas the heaters are incorporated as a source term on the right hand side $f_\mu$.
Moreover, we assume an outside temperature of $u_{\text{out}}=5$.
For our discretization we choose the conforming case for the FOM, i.e. $V_h^\pr = V_h^\du$, and use a mesh size $h= \sqrt{2}/200$ which resolves all features from the given picture and results in $\dim V_h^\pr = 80601$ degrees of freedom.

For the parameterization, we choose 2 doors, 7 heaters and 3 walls which results in a parameter space of 12 dimensions. More details can be viewed in the accompanying source code.  

\subsection{Analysis of the error behavior}
\label{sec:error_behavior}
This section aims to show and discuss the model reduction error for the proposed Petrov--Galerkin approach.
Furthermore, we compare it to the Galerkin strategy from the NCD-corrected approach, where we denote the solutions by $u_{\red}^G$, $p_{\red}^G$ and the corrected functional by $\cJhatn^\text{NCD}$, as well as to the non corrected approach from \cite{MR3716566} whose functional we denote by $\hat{J}_\red$. 
For this, we employ a standard goal oriented Greedy search algorithm as also done in \cite[Section 4.3.1]{keil2020nonconforming}
with the relative a posteriori error of the objective functional $\Delta_{\cJhatn}(\mu)/\cJhatn(\mu)$.
As pointed out in Section \ref{sec:a_posteriori_estimation}, due to $V_h^\pr = V_h^\du$, we can replace the inf-sup constant by a lower bound  for the coercivity constant of the conforming approach.
It is also important to mention that for this experiment, we have simplified our objective functional $\J$ by setting the domain of interest to the whole domain $D \equiv \Omega$.
As a result the dual problem is simpler which enhances the stability of the PG approach. The reason for that is further discussed below.
Figure \ref{fig:estimator_study} shows the difference in the decay and accuracy of the different approaches.
It can clearly be seen that the NCD-corrected approach remains the most accurate approach while the objective functional and the gradient of the PG approach shows a better approximation compared to the non corrected version.
Clearly the PG approximation of the primal and dual solutions are less accurate and at least the primal error  decays sufficiently. 

\begin{figure}[t]
	\centering%
	\footnotesize%
	\begin{tikzpicture}
	
	\definecolor{color0}{rgb}{0.65,0,0.15}
	\definecolor{color1}{rgb}{0.84,0.19,0.15}
	\definecolor{color2}{rgb}{0.96,0.43,0.26}
	\definecolor{color3}{rgb}{0.99,0.68,0.38}
	\definecolor{color4}{rgb}{1,0.88,0.56}
	\definecolor{color5}{rgb}{0.67,0.85,0.91}
	\definecolor{color6}{rgb}{0.27,0.46,0.71}
	\definecolor{color7}{rgb}{0.19,0.21,0.58}
	
	\begin{axis}[
	name=top_middle,
	width=6cm,
	height=4.5cm,
	xshift=2cm,
	log basis y={10},
	tick align=outside,
	ytick pos=right,
	xtick pos=bottom,
	x grid style={white!69.0196078431373!black},
	xmajorgrids,
	xmin=2, xmax=62,
	xtick style={color=black},
	y grid style={white!69.0196078431373!black},
	ymajorgrids,
	ymin=1e-05, ymax=1e03,
	ymode=log,
	ytick style={color=black}
	]
	\addplot [semithick, color6, mark=square*, mark size=2, mark options={solid, rotate=45, fill opacity=0.5}]
	table {%
            4    43.0423546 
            8     4.7622899 
           12     3.8108226 
           16     2.3004248 
           20     0.9208100 
           24     0.1874082 
           28     0.1138518 
           32     0.0583603 
           36     0.0300142 
           40     0.0269577 
           44     0.0338155 
           48     0.0187490 
           52     0.0172461 
           56     0.0165199 
           60     0.0077635 
 };
 \addplot [semithick, color0, mark=diamond*, mark size=2, mark options={solid, rotate=180, fill opacity=0.5}]
 table {%
            4   4.3567802 
            8   0.9740319 
           12   0.7101863 
           16   0.2501180 
           20   0.0288619 
           24   0.0110932 
           28   0.0053377 
           32   0.0025348 
           36   0.0010780 
           40   0.0002019 
           44   0.0001357 
           48   0.0000615 
           52   0.0000577 
           56   0.0000448 
           60   0.0000329 
 };
 \addplot [semithick, color3, mark=triangle*, mark size=2, mark options={solid, fill opacity=0.5}]
 table {%
            4  3.9085827 
            8  1.2450593 
           12  0.7329396 
           16  0.3251995 
           20  0.0247842 
           24  0.0110414 
           28  0.0072402 
           32  0.0094383 
           36  0.0029239 
           40  0.0019344 
           44  0.0011918 
           48  0.0000691 
           52  0.0000788 
           56  0.0000418 
           60  0.0000387 
 };
 \addplot [semithick, color6, mark=square*, mark size=2, mark options={solid, rotate=180, fill opacity=0.5}]
 table {%
            4  113.8903117 
            8   78.8674009 
           12   66.5765970 
           16   41.4825594 
           20    3.3217361 
           24    0.9909078 
           28    0.6019228 
           32    0.5208526 
           36    0.3999896 
           40    0.0916279 
           44    0.0960969 
           48    0.0418367 
           52    0.0430256 
           56    0.0365259 
           60    0.0224140 
 };
 \addplot [semithick, color0, mark=diamond*, mark size=2, mark options={solid, rotate=90, fill opacity=0.5}]
 table {%
            4  116.8902273 
            8   71.2875380 
           12   53.3466076 
           16   21.9862603 
           20    1.4742002 
           24    0.4527781 
           28    0.2880695 
           32    0.1328267 
           36    0.0677626 
           40    0.0183905 
           44    0.0082919 
           48    0.0036965 
           52    0.0053584 
           56    0.0018361 
           60    0.0018599 
 };
 \addplot [semithick, color3, mark=triangle*, mark size=2, mark options={solid, rotate=180, fill opacity=0.5}]
 table {%
            4 116.9413910 
            8  81.3719586 
           12  43.7234391 
           16  37.3951660 
           20   1.3646116 
           24   0.6264351 
           28   0.3770512 
           32   0.4323930 
           36   0.1604367 
           40   0.0830978 
           44   0.1297149 
           48   0.0083390 
           52   0.0044400 
           56   0.0021994 
           60   0.0033864 
 };

 \legend{};
 \end{axis}
 
 \begin{axis}[
 name=top_right,
 at=(top_middle.east),
 anchor=west,
 xshift=1.2cm,
 width=6cm,
 height=4.5cm,
 legend cell align={left},
 legend style={fill opacity=0.8, draw opacity=1, text opacity=1, at={(1.2,0)}, anchor=south, draw=white!80!black},
 log basis y={10},
 tick align=outside,
 x grid style={white!69.0196078431373!black},
 xmajorgrids,
 xmin=2, xmax=62,
 xtick style={color=black},
 xtick pos=bottom,
 y grid style={white!69.0196078431373!black},
 ymajorgrids,
 ymin=1e-05, ymax=1e03,
 ymode=log,
 yticklabels={,,},
 ytick pos=left,
 ytick style={color=black}
 ]
 \addplot [semithick, color0, mark=*, mark size=2, mark options={solid, fill opacity=0.5}]
 table {%
            4   0.6979035 
            8   0.1683773 
           12   0.0880920 
           16   0.0661280 
           20   0.0349068 
           24   0.0068316 
           28   0.0045575 
           32   0.0036580 
           36   0.0026852 
           40   0.0012673 
           44   0.0009064 
           48   0.0005655 
           52   0.0003778 
           56   0.0003298 
           60   0.0002402 
   };
   \addplot [semithick, color3, mark=triangle*, mark size=2, mark options={solid, rotate=180, fill opacity=0.5}]
   table {%
            4 0.7062068 
            8 0.3404906 
           12 0.1114368 
           16 0.1809833 
           20 0.1392032 
           24 0.0474685 
           28 0.0073499 
           32 0.0077065 
           36 0.0051511 
           40 0.0101835 
           44 0.0090959 
           48 0.0016053 
           52 0.0006615 
           56 0.0005745 
           60 0.0006952 
 };
 \addplot [semithick, color0, mark=pentagon*, mark size=2, mark options={solid, rotate=90, fill opacity=0.5}]
 table {%
            4    40.7460763 
            8    38.8899377 
           12    33.2418317 
           16    23.2686215 
           20     9.4626144 
           24     4.1539872 
           28     3.4412535 
           32     2.8128896 
           36     2.3952862 
           40     1.2969305 
           44     1.1588881 
           48     0.9383411 
           52     0.7765199 
           56     0.6220427 
           60     0.4001464 
 };
 \addplot [semithick, color3, mark=star, mark size=2, mark options={solid, fill opacity=0.5}]
 table {%
            4   40.8146894 
            8   39.3312537 
           12   35.1527006 
           16   35.3298773 
           20   10.6842491 
           24   10.9024490 
           28    8.1679185 
           32   12.9165319 
           36   11.0609593 
           40    8.8701537 
           44   43.5757322 
           48    5.3983823 
           52    7.8105103 
           56    5.2246835 
           60   10.7762531 
	};
	\legend{};
	\end{axis}
	
	\node[anchor=north, yshift=-15pt] at (top_middle.south) {greedy extension step};
	\node[anchor=north, yshift=-15pt] at (top_right.south) {greedy extension step};
	\node[anchor=south, yshift=4pt, xshift=2.2cm] at (top_middle.north west) {(A) functional $\Jhat_h$ and gradient $\nabla \Jhat_h$};
	\node[anchor=south, yshift=4pt, xshift=2.1cm] at (top_right.north west) {(B) primal and dual solution};
	
	\end{tikzpicture}
	\begin{tikzpicture}
	
	\definecolor{color0}{rgb}{0.65,0,0.15}
	\definecolor{color1}{rgb}{0.84,0.19,0.15}
	\definecolor{color2}{rgb}{0.96,0.43,0.26}
	\definecolor{color3}{rgb}{0.99,0.68,0.38}
	\definecolor{color4}{rgb}{1,0.88,0.56}
	\definecolor{color5}{rgb}{0.67,0.85,0.91}
	\definecolor{color6}{rgb}{0.27,0.46,0.71}
	\definecolor{color7}{rgb}{0.19,0.21,0.58}
	
	\begin{customlegend}[legend cell align={left}, legend style={fill opacity=0.8, draw opacity=1, text opacity=1,
		at=(top_right.south),
		anchor=north,
		xshift=-5.5cm,
		yshift=-1.2cm,
		inner sep=5pt,
		/tikz/column 2/.style={
			column sep=4pt},
		/tikz/column 4/.style={
			column sep=4pt},
		/tikz/column 6/.style={
			column sep=4pt},
		/tikz/column 8/.style={
			column sep=4pt},
		draw=white!80!black}, 
	legend columns=4,
	legend entries={
		$|\Jhat_h - \Jnoncor_\red|$,
		$|\nabla\Jhat_h - \tilde{\nabla}\Jnoncor_\red|$,
		$|u_{h,\mu} - u^{G}_{\red,\mu}|$,
		$|p_{h,\mu} - p^{G}_{\red,\mu}|$, 
		$|\Jhat_h - \cJhatn^{\text{NCD}}|$,
		$|\nabla\Jhat_h - \nabla\cJhatn^{\text{NCD}}|$,
		$|u_{h,\mu} - u_{\red,\mu}|$,
		$|p_{h,\mu} - p_{\red,\mu}|$,
		$|\Jhat_h - \cJhatn|$,
		$|\nabla\Jhat_h - \nabla\cJhatn|$
	}]
	\addlegendimage{semithick, color6, mark=square*, mark size=2, mark options={solid, rotate=45, fill opacity=0.5}}
	\addlegendimage{semithick, color6, mark=square*, mark size=2, mark options={solid, rotate=180, fill opacity=0.5}}
	\addlegendimage{semithick, color0, mark=*, mark size=2, mark options={solid, fill opacity=0.5}}
	\addlegendimage{semithick, color0, mark=pentagon*, mark size=2, mark options={solid, rotate=90, fill opacity=0.5}}
	\addlegendimage{semithick, color0, mark=diamond*, mark size=2, mark options={solid, rotate=180, fill opacity=0.5}}
	\addlegendimage{semithick, color0, mark=diamond*, mark size=2, mark options={solid, rotate=90, fill opacity=0.5}}
	\addlegendimage{semithick, color3, mark=triangle*, mark size=2, mark options={solid, rotate=180, fill opacity=0.5}}
	\addlegendimage{semithick, color3, mark=star, mark size=2, mark options={solid, fill opacity=0.5}}
	\addlegendimage{semithick, color3, mark=triangle*, mark size=2, mark options={solid, fill opacity=0.5}}
	\addlegendimage{semithick, color3, mark=triangle*, mark size=2, mark options={solid, rotate=180, fill opacity=0.5}}	
	\end{customlegend}
	\end{tikzpicture}
	
	\caption{%
		Evolution of the true reduction error in the reduced functional and gradient and its approximations (A)
		and the primal and dual solutions and its approximations (B), during adaptive greedy basis generation.
		Depicted is the $L^\infty(\Params_\textnormal{val})$-error for a validation set $\Params_\textnormal{val} \subset \Params$ of $100$ randomly selected parameters, i.e.~$|\Jhat_h - \Jnoncor_\red|$ corresponds to $\max_{\mu \in \Params_\textnormal{val}} |\Jhat_h(\mu) - \Jnoncor_\red(\mu)|$, and so forth.
	}
	\label{fig:estimator_study}
\end{figure}
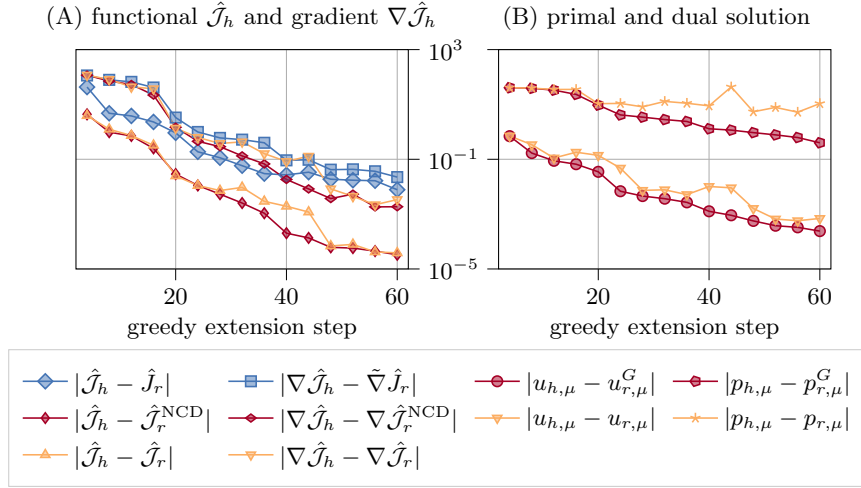

While producing the shown error study, we experienced instabilities of the PG reduced primal and dual systems.
This is due to the fact that for very complex dual problems, the test space of each problem very poorly fits to the respective ansatz space.
In fact, the decay of the primal error that can be seen in Figure \ref{fig:estimator_study}(B) can not be expected in general and is indeed a consequence of the simplification of the objective functional, where we chose $D \equiv \Omega$.
The stability problems can already be seen in the dual error and partly on the primal error for basis size 20.
In general it can happen that the reduced systems are highly unstable for specific parameter values.
Instead, a sophisticated Greedy Algorithm for deducing appropriate reduced spaces may require to build a larger dual or primal space by adding stabilizing snapshots, i.e. search for supremizers \cite{ballarin2015supremizer}.
Since the aim of the work at hand is not to provide an appropriate Greedy based algorithm, we instead decided to reduce the complexity of the functional where stability issues are less present.

\subsection{TR-RB algorithm}

We now compare the PG variant of the TR-RB with the above mentioned NCD-corrected TR-RB approach.
Importantly, we use the original version of the model problem, i.e. we pick the domain of interest to be defined as suggested in Figure \ref{ex1:blueprint}.
In fact, we accept the possibility of high instabilities in the reduced model.
We pick ten random starting parameter, perform both algorithms and compare the averaged result.
In Figure \ref{Fig:EXC12} one particular starting parameter is depicted and
in Table \ref{Tab:EXC12} the averaged results for all ten optimization runs are shown.
\begin{figure}
	\footnotesize
	\centering
	\begin{tikzpicture}
	\definecolor{color0}{rgb}{0.65,0,0.15}
	\definecolor{color1}{rgb}{0.84,0.19,0.15}
	\definecolor{color2}{rgb}{0.96,0.43,0.26}
	\definecolor{color3}{rgb}{0.99,0.68,0.38}
	\definecolor{color4}{rgb}{1,0.88,0.56}
	\definecolor{color5}{rgb}{0.67,0.85,0.91}
	\begin{axis}[
	name=left,
	width=6.5cm,
	height=4.5cm,
	log basis y={10},
	tick align=outside,
	tick pos=left,
	legend style={nodes={scale=0.7}, fill opacity=0.8, draw opacity=1, text opacity=1,
				  xshift=-1.6cm, yshift=-0.5cm, xshift=4.5cm, draw=white!80!black},
	x grid style={white!69.0196078431373!black},
	xlabel={time in seconds [s]},
	xmajorgrids,
	xtick style={color=black},
	y grid style={white!69.0196078431373!black},
	ymajorgrids,
	ymode=log,
	ylabel={\(\displaystyle \| \overline{\mu}-\mu^{(k)} \|^\text{rel}_2\)},
	ytick style={color=black}
	]
	\addplot [semithick, color0, mark=triangle*, mark size=3, mark options={solid, fill opacity=0.5}]
	table {%
        13.0756642818451 0.574286171259373
        31.8166162967682 0.574017027495371
        38.5897059440613 0.555477788898345
        45.570326089859 0.52429437114602
        52.9812984466553 0.507084209408289
        60.9106016159058 0.41660095554096
        69.3280558586121 0.375510754302739
        78.2450692653656 0.299488619035033
        88.5779669284821 0.113477915062374
        107.952816724777 0.0114236653909785
        135.231210231781 0.000147935221399132
        151.076913118362 2.58254983619181e-05
        164.675134181976 5.26070844357256e-06
	};
	\addlegendentry{BFGS NCD TR-RB \cite{keil2020nonconforming}}
	\addplot [semithick, color3, mark=*, mark size=3, mark options={solid, fill opacity=0.5}]
	table {%
        12.1607882976532 0.574286171259373
        35.4619424343109 0.574040233206862
        43.4497337341309 0.569126018670776
        52.2917582988739 0.56124503305629
        62.6727526187897 0.548418201394747
        74.8126885890961 0.525321267398303
        88.072719335556 0.524967245607288
        102.938640356064 0.519210519549374
        120.043780326843 0.518929447625113
        138.781098842621 0.504681744974713
        159.799144983292 0.478880041078375
        182.603423595428 0.428336507745106
        208.776380777359 0.405688185871396
        237.186156749725 0.30213730835205
        268.500371694565 0.236613269296937
        302.714802026749 0.202947643541194
        342.248579025269 0.0351113761140485
        384.453255176544 0.0291327949814106
        437.500003099442 3.37918525465338e-05
        486.676156520844 9.02321967828624e-07
	};
	\addlegendentry{BFGS PG TR-RB}
	\end{axis}
	\end{tikzpicture}
	\captionsetup{width=\textwidth}
	\caption{\footnotesize{%
			Relative error decay w.r.t. the optimal parameter $\bar\mu$ and performance of selected algorithms for a single optimization run with random initial guess $\mu^{(0)}$ for $\tau_\text{FOC} = 10^{-6}$.}}
	\label{Fig:EXC12}
\end{figure}
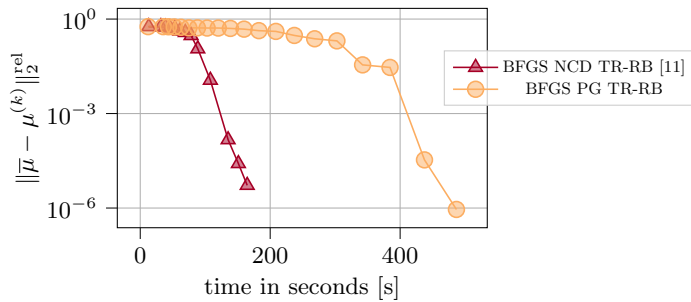

\begin{table}
	\footnotesize
	\centering
	\begin{tabular}{l|cc|c|cc}
		& runtime[s]
		& & iterations $k$\\
		& avg.~(min/max)
		& speed-up
		& avg.~(min/max)
		& rel.~error
		& FOC cond.\\\hline 
		FOM BFGS    & 6955~(4375/15556) & --&471.44~(349/799) & 3.98e-5 & 3.36e-6\\
		TR NCD-BFGS & 171~(135/215)& 40.72&~12.56(11/15) & 4.56e-6& 6.05e-7\\
		TR PG-BFGS  & 424~(183/609)& 16.39&~17.56(11/22) & 4.62e-6& 7.57e-7
	\end{tabular}
	\captionsetup{width=\textwidth}
	\caption{\footnotesize{%
			Performance and accuracy of the algorithms for ten optimization runs with random initial guess $\mu^{(0)}$ and $\tau_{\text{\rm{FOC}}}=10^{-6}$.
			\label{Tab:EXC12}}}
\end{table}

It can be seen that the PG variant is a valid approach and converges sufficiently fast with respect to the FOM BFGS method. 
Surely, it can not be said that this stability issues do not enter the performance of the proposed TR-RB methodology but regardless of the stability of the reduced system, we note that the convergence result in Theorem 1 still holds true. 
However, the instability of the reduced systems clearly harms the algorithm from iterating as fast as the NCD-corrected approach.
One reason for that is that the trust region is much larger for the NCD-corrected approach, allowing the method to step faster.
We would like to emphasize that the depicted result in Figure \ref{Fig:EXC12} is neither an instance of the worst nor the best performance of the PG approach but rather an intermediate performance.
The comparison highly depends on the starting parameter, and the structure of the optimization problem.
As discussed above, the suggested PG approach can potentially benefit from more involved enrichment strategies that account for the mentioned stability issues.

Last but not least, it is important to mention that the above experiment showed weaknesses of the chosen projected BFGS approach as FOM method as well as for the TR-RB subproblems which has been extensively studied in \cite{banholzer2020adaptive}.
Instead, it is beneficial to choose higher order optimization methods, such as projected Newton type methods, where also the PG variant can be applied to neglect the more involved contributions from the NCD-corrected approach that are entering the Hessian approximation.
 
 \section{Concluding remarks} 
In this contribution we demonstrated, how adaptive enrichment based on rigorous a posteriori error control can be used within a Trust-Region-Reduced-Basis approach to speed up the solution of large scale PDE constrained optimization problems. Within this approach the reduced approximation spaces are tailored towards the solution of the overall optimization problem and thus circumvent the offline construction of a reduced model with good approximation properties with respect to the whole parameter regime. In particular, we compared a new Petrov-Galerkin approach with the NCD-Galerkin approach that has recently been introduced in \cite{banholzer2020adaptive}. Our results demonstrate the benefits of the Petrov-Galerkin approach with respect to the approximation of the objective functional and its derivatives as well as with respect to corresponding a posteriori error estimation. However, the results also show deficiencies with respect to possible stability issues. Thus, further improvements of the enrichment strategy are needed in order to guarantee uniform boundedness of the reduced inf-sup constants.  
 
%
%
{
	\bibliographystyle{abbrv}

}

\end{document}